\documentclass[12pt]{amsart}
\usepackage{graphicx} 
\usepackage{amsthm}
\usepackage{amssymb}
\usepackage{mathtools}
\usepackage{MnSymbol}
\usepackage{xcolor}
\usepackage{bbm}
\usepackage[normalem]{ulem}
\DeclareMathAlphabet{\mymathbb}{U}{bbold}{m}{n}
\usepackage{dsfont}
\usepackage{hyperref}



\usepackage{amssymb}

\usepackage{enumitem}

\makeatletter
\@namedef{subjclassname@2020}{%
  \textup{2020} Mathematics Subject Classification}
\makeatother

\usepackage[T1]{fontenc}


\newtheorem{theorem}{Theorem}[section]

\newtheorem{lemma}[theorem]{Lemma}

\newtheorem{fact}[theorem]{Fact}



\theoremstyle{definition}
\newtheorem{definition}[theorem]{Definition}



\numberwithin{equation}{section}


\frenchspacing

\textwidth=13.5cm
\textheight=23cm
\parindent=16pt
\oddsidemargin=-0.5cm
\evensidemargin=-0.5cm
\topmargin=-0.5cm





\begin{document}


\baselineskip=17pt


\title{Projectional entropy for actions of amenable groups}

\author[M. Prusik]{Michał Prusik}
\address{Faculty of Pure and Applied Mathematics\\ Wroclaw University of Science and Technology\\
Wybrzeże Wyspiańskiego 27\\
50-370 Wrocław Poland}
\email{michal.prusik@pwr.edu.pl}

\date{}

\begin{abstract}
In the current paper we attempt to transfer the notion of the projectional entropy, originally defined for multidimensional subshifts, to the case of actions of amenable groups. The main theorem states that if a system is strongly irreducible the equality of the entropy and the projectional entropy implies that the system has a product-like structure.
\end{abstract}

\subjclass[2020]{Primary 37B05, 37B40; Secondary 37B10}

\keywords{entropy, group action, amenable group, subshift}

\maketitle

	\section{Introduction}
	In ``Projectional Entropy in Higher Dimensional Shifts of Finite Type'' (\cite{JKM}) A. Johnson, S. Kass and K. Madden introduced an object called the \textit{projectional entropy}. The authors {showed several} basic properties {of this entropy} and {proved} a theorem {relating} this notion {to} the structure of the subshift {in the case of two-dimensional subshifts}. In 2010 M. Schraudner {extended} their results in {\cite{S} to higher dimensional subshifts, by adding an assumption of mixing type}. {In the current paper we aim} to generalize the notion of the projectional entropy for actions of amenable groups. 
	
	The entropy is one of the main objects of interest in the theory of dynamical systems. For the case of symbolic dynamics it gives us an information about the rate of exponential growth of the number of configurations appearing in the system. That growth {is measured when the domain of a configuration expands simultaneously} in every ``direction'' of the space. The main intuition behind the projectional entropy is to look only in some directions. Namely, we restrict the elements of the subshift $X\subset \mathcal A ^{\mathbb Z^d}$ (where $\mathcal A$ is a fixed alphabet) to a chosen \textit{sublattice} $L$, where a sublattice is a subgroup of $\mathbb Z^d$ isomorphic to $\mathbb Z^r$ for some $r<d$, which has a \textit{complementary} sublattice $L'$ (namely the quotient group $\mathbb Z^d/L$) isomorphic to $\mathbb Z^{(d-r)}$. So obtained restriction $X_L$ is treated as an $r$-dimensional subshift and its entropy is called the \textit{$L$-projectional entropy of $X$}. By comparing this {quantity} with the entropy of the original space $X$ we get some information about a ``product-like'' structure of $X$.
	
    It turns out that one does not need to limit only to sublattices. Consider $L=\mathrm{span}\{(2,0),(0,2)\}\subset \mathbb Z^2$ (the lattice of vectors with both even coordinates). One can easily see, that in this case the quotient group is isomorphic to $\mathbb Z_2\times \mathbb Z_2$, so $L$ is not a sublattice. However the restriction $X_L$ is still a subshift of $\mathcal A^{\mathbb Z^2}$ and at least some of the results of \cite{S} hold in this case. 
    This led us to a hypothesis, that the projectional entropy may be well defined for a restriction to any normal subgroup. In the current paper we attempt to transfer the results of \cite{JKM} and \cite{S} to the case of actions of amenable groups.
    The main theorem states that if a certain mixing condition holds, namely the system is strongly irreducible, the equality of the entropy and the projectional entropy with respect to a certain normal subgroup implies that the system has a product-like structure.

	\section{Basic notions}
	To simplify the notation, for a countable set $A$ let $\mathcal F(A)$ denote the family of all finite and nonempty subsets of $A$.
	\begin{definition}
		Let $G$ be a countable group. A sequence $F_n\in\mathcal F(G)$ is a \textit{F{\o}lner sequence}, if it satisfies the following condition:
		$$\forall_{g\in G}~~\lim_{n\to \infty}\frac{|gF_n\bigtriangleup F_n|}{|F_n|}=0,$$
		where $gF=\{gf:f\in F\}$ for $g\in G, F\subset G$.
		
		We call $G$ \textit{amenable}, if there exists a F{\o}lner sequence in $G$.
	\end{definition}
	It is obvious, that if $(F_n)$ is a F{\o}lner sequence, then for any nonempty, finite $T\subset G$ the term $\frac{|TF_n\bigtriangleup F_n|}{|F_n|}$ converges to $0$ (for $A,B\subset G$ by $AB$ we denote the set $\{ab:a\in A, b\in B\}$). Also the following holds. 	
        \begin{lemma}
		\label{lematopowiekszaniuCF}
		Let $(F_n)$ be a Følner sequence in $G$ and let $B$ be a nonempty finite subset of $G$. Then $\tilde F_n:=BF_n$ is also a Følner sequence in $G$.
	\end{lemma}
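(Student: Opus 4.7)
The plan is to derive the Følner property of $\tilde F_n = BF_n$ directly from the strengthened Følner property noted immediately above the lemma, namely that $|TF_n \triangle F_n|/|F_n| \to 0$ for every finite nonempty $T \subset G$.

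First I would record the elementary lower bound $|\tilde F_n| = |BF_n| \geq |F_n|$ (pick any $b \in B$; then $bF_n \subset BF_n$ and $|bF_n| = |F_n|$). This will let me pass from denominators $|F_n|$ to denominators $|\tilde F_n|$ at the end.

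Next, for a fixed $g \in G$, I would use the standard symmetric-difference inclusion through a common ``reference'' set $F_n$:
\[
gBF_n \triangle BF_n \subset (gBF_n \triangle F_n) \cup (F_n \triangle BF_n),
\]
so that
\[
|g\tilde F_n \triangle \tilde F_n| \leq |(gB)F_n \triangle F_n| + |BF_n \triangle F_n|.
\]
Both $gB$ and $B$ are finite and nonempty, so by the remark preceding the lemma each of the two terms on the right, divided by $|F_n|$, tends to $0$. Combined with $|\tilde F_n| \geq |F_n|$ this gives
\[
\frac{|g\tilde F_n \triangle \tilde F_n|}{|\tilde F_n|}
\leq \frac{|(gB)F_n \triangle F_n|}{|F_n|} + \frac{|BF_n \triangle F_n|}{|F_n|} \xrightarrow[n\to\infty]{} 0,
\]
which is exactly the Følner condition for $\tilde F_n$.

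I do not anticipate a real obstacle here; the only thing to be careful about is that the unstated input (the ``$T$-version'' of the Følner property) is itself a short exercise. If one were not allowed to quote it, the main step would be to prove it first, by writing $TF_n \triangle F_n \subset \bigcup_{t \in T}(tF_n \triangle F_n)$ and summing over $t$; but since the paper asserts it right before the lemma, I would simply invoke it.
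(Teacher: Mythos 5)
Your proposal is correct and follows essentially the same route as the paper: the same triangle inequality for the symmetric difference through the reference set $F_n$, the same bound $|BF_n|\geqslant|F_n|$ to switch denominators, and the same appeal to the $T$-version of the F{\o}lner property for the finite sets $gB$ and $B$. No gaps.
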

	\begin{proof}
		Fix $g\in G$. From triangle inequality for symmetric difference and the fact that $|BF_n|\geqslant |F_n|$ we obtain:
		$$\frac{|g\tilde F_n \bigtriangleup \tilde F_n|}{|\tilde F_n|}\leqslant \frac{|gBF_n\bigtriangleup F_n|}{|F_n|} + \frac{|F_n\bigtriangleup BF_n|}{|F_n|}.$$
		Since $B$ (and so $gB$) is nonempty and finite, and $(F_n)$ is a Følner sequence, both terms on the right converge to zero.
	\end{proof}
	
	The fact below is widely known.
	\begin{fact}
		\label{fact_amenable_nor_subgr_i_quotient}
		Let $G$ be a countable group and $H$ be its normal subgroup. Then
		$$G \textrm{ is amenable }\iff H\textrm{ and } G/H \textrm{ are amenable.}$$
	\end{fact}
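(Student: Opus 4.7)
My plan is to prove the two implications separately, staying inside the F{\o}lner-sequence framework of the paper. I would first invoke the standard equivalent characterization of amenability for countable $G$: $G$ is amenable iff for every finite $K\subset G$ and every $\epsilon>0$ there exists a finite $F\subset G$ with $|kF\bigtriangleup F|<\epsilon|F|$ for all $k\in K$ (a F{\o}lner sequence is recovered by exhausting $G$ by finite sets and letting $\epsilon\to 0$).

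The forward direction ($G$ amenable implies $H$ and $G/H$ amenable) is classical: an invariant mean on $G$ pushes forward to $G/H$ through $\pi$, and, after Day's averaging to a two-sided invariant mean, restricts to $H$ via any transversal. Both arguments translate routinely into F{\o}lner form, so I would sketch this briefly or simply cite a standard reference.

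The content lies in the reverse direction. Fix a finite $K\subset G$ and $\epsilon>0$, pick a section $s\colon G/H\to G$, and write each $g\in K$ as $s(\bar g)h_g$ with $h_g\in H$. Using amenability of $G/H$, choose a $(\pi(K),\epsilon/2)$-invariant finite $\bar F\subset G/H$. Now the crucial use of \emph{normality} occurs: for each pair $(g,\bar f)\in K\times\bar F$, the product $g\cdot s(\bar f)$ lies in the coset $s(\bar g\bar f)H$, so there is a unique $b(g,\bar f)\in H$ with
$$g\cdot s(\bar f)=s(\bar g\bar f)\cdot b(g,\bar f).$$
The set $B:=\{b(g,\bar f):g\in K,\,\bar f\in\bar F\}$ is a finite subset of $H$, and by amenability of $H$ I can pick a $(B,\epsilon/2)$-invariant finite $E\subset H$. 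Define
$$F:=\bigsqcup_{\bar f\in\bar F}s(\bar f)\,E,\qquad |F|=|\bar F|\cdot|E|.$$
A coset-by-coset calculation splits $|gF\bigtriangleup F|$ into a ``coset part'' of size at most $|\bar g\bar F\bigtriangleup\bar F|\cdot|E|$ (from cosets on which $\bar g\bar F$ and $\bar F$ disagree) and an ``$H$-part'' of size at most $|\bar F|\cdot\max_{g,\bar f}|b(g,\bar f)E\bigtriangleup E|$ (from the twist inside each surviving coset). Dividing by $|F|$ yields $|gF\bigtriangleup F|/|F|<\epsilon$ uniformly in $g\in K$.

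The main obstacle is exactly this ``twisting'' phenomenon. Translating $F$ by a fixed $g$ pushes $h_g$ past each representative $s(\bar f)$ and produces a family of elements of $H$ that \emph{a priori} depends on $\bar f$. Normality guarantees these twists remain inside $H$, so that amenability of $H$ can be applied to them at all, and the finiteness of $K$ and $\bar F$ keeps their total number finite, which is precisely what lets us choose a \emph{single} $E$ simultaneously invariant under all of them.
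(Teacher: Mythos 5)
Your proposal is correct. Note, however, that the paper offers no proof of this statement at all: it is labelled a ``widely known'' fact and used as a black box, so there is nothing to compare your argument against line by line. What you give for the reverse implication is the standard F{\o}lner-set proof of the extension property, and it is sound: the identity $g\,s(\bar f)=s(\bar g\bar f)\,b(g,\bar f)$ with $b(g,\bar f)=s(\bar g\bar f)^{-1}g\,s(\bar f)\in\ker\pi=H$ is exactly where normality enters (it is what makes $\pi$ a homomorphism and $G/H$ a group in the first place), the union $F=\bigsqcup_{\bar f}s(\bar f)E$ really is disjoint since the pieces sit in distinct left cosets, and the coset-by-coset estimate $|gF\bigtriangleup F|\leqslant|\bar g\bar F\bigtriangleup\bar F|\,|E|+|\bar F|\max_{g,\bar f}|b(g,\bar f)E\bigtriangleup E|<\epsilon|F|$ is correct and matches the paper's left-multiplication F{\o}lner convention. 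Two small remarks: the decomposition $g=s(\bar g)h_g$ you introduce at the start is never actually needed (the elements $b(g,\bar f)$ absorb it), and the forward direction is only sketched with a reference to invariant means --- for the subgroup part a left-invariant mean and a transversal already suffice, without Day's two-sided averaging --- but deferring that direction to the literature is consistent with the paper's own treatment.
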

 
	\begin{definition}
		Let $G$ be a countable amenable group and let $\mathcal{A}$ be a finite set with the  discrete topology. We define the \textit{shift action} of $G$ on $\mathcal A^G$ as follows:
		$$\forall_{x\in \mathcal{A}^G} ~ \forall_{g,h\in G} ~~(gx)(h)=x(hg).$$
		
		The \textit{full $G$-shift} (or just the \textit{full shift}, if $G$ is known from the context) is the product space $\mathcal{A}^G$ equipped with the product topology and the shift action of $G$. We will call the set $\mathcal{A}$ the \textit{alphabet}, and its elements will be \textit{symbols}.
	\end{definition}

    Each element of the full $G$-shift is in fact a function $x:G\to \mathcal{A}$ thus it is reasonable to consider its restriction $x|_F$ to a set $F\subset G$.
		Let us notice that the full $G$-shift is, by the Tikhonov's theorem, a compact topological space. It is well known that the the full shift is metrizable and that a sequence $x_n\in\mathcal A^G$ converges to $x\in\mathcal A^G$ if and only if for any $g\in G$ we have $x_n(g)=x(g)$ for $n$ bigger than some $N_g$.
	
	We say that $Y\subset \mathcal A^G$ is $G$\textit{-invariant} if it is invariant under the shift action of $G$, i.e. $gY=\{gy: y\in Y\}=Y$ for any $g\in G$.
	\begin{definition}
		A nonempty, compact and $G$-invariant set $X\subset \mathcal A^G$ equipped with the shift action of $G$ will be called a \textit{$G$-subshift} or just a \textit{subshift} if $G$ is known from the context.
	\end{definition}
	
	\begin{definition}
		Let $F\in \mathcal F(G)$. A \textit{configuration on $F$} is an element of $\mathcal A^F$. We say that a configuration $\mathsf{C}$ on $F$ \textit{appears} in $x\in\mathcal A^G$ if $x|_{F}=\mathsf C$.
		
		For a subshift $X$ by $\mathcal L_F(X)$ we denote the set of all configurations on $F$ that appear in the elements of $X$. The set $\mathcal L(X)=\bigcup _{F\in \mathcal F(G)}\mathcal L_F(X)$ is called the \textit{language} of $X$.
	\end{definition}
	Similarly to the classical case, one obtains an equivalent definition of a subshift by forbidding configurations. 
	\begin{theorem}
		$X\subset \mathcal A^G$ is a subshift if and only if there exists $\mathcal N\subset \bigcup_{F\in \mathcal{F}(G)}\mathcal A^F$ (a set of configurations) such that
		$$X=\{x\in\mathcal A^G: \forall_{g\in G}\forall_{\mathsf C\in \mathcal N} ~\mathsf C \textrm{ does not appear in } gx\}.$$
	\end{theorem}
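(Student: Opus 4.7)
My plan is to prove the two implications separately. For the easy direction ($\Leftarrow$), given any set $\mathcal N$ of configurations, I would verify the defining properties of a subshift for the resulting set $X$ (modulo the tacit assumption that $\mathcal N$ is chosen so that $X$ is nonempty). The $G$-invariance reduces to the identity $h(gx)=(hg)x$, which follows at once from $(gx)(k)=x(kg)$. Closedness in the product topology is routine: if $x_n\to x$, then $x_n$ and $x$ agree on any fixed finite set eventually, and the shift action is continuous, so if some forbidden $\mathsf C\in\mathcal N$ on a finite $F$ appeared in $gx$, it would have to appear in $gx_n$ for large $n$ as well, contradicting $x_n\in X$. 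Compactness then follows from closedness inside the Tikhonov-compact $\mathcal A^G$.

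For the harder direction ($\Rightarrow$), given a subshift $X$, the natural candidate is the set of all configurations that never appear in $X$, namely
$$\mathcal N:=\bigcup_{F\in \mathcal F(G)}\bigl(\mathcal A^F\setminus\mathcal L_F(X)\bigr).$$
The inclusion of $X$ in the set cut out by these forbidden patterns is immediate: each $gx\in X$ by $G$-invariance, so every finite restriction of $gx$ lies in $\mathcal L(X)$ and is therefore not in $\mathcal N$. The reverse inclusion is the main point. Suppose $x\in\mathcal A^G$ is such that no $\mathsf C\in\mathcal N$ appears in any shift $gx$; in fact taking $g=e$ alone already suffices, as this forces $x|_F\in\mathcal L_F(X)$ for every $F\in\mathcal F(G)$.

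The key step is then an approximation argument via compactness, and it is the one place where one must use something beyond the definitions. Since $G$ is countable, I would fix an exhausting chain $F_1\subset F_2\subset\cdots$ of finite subsets with $\bigcup_n F_n=G$. For each $n$, pick some $y_n\in X$ with $y_n|_{F_n}=x|_{F_n}$, using the previous paragraph. By compactness of $\mathcal A^G$ and closedness of $X$, a subsequence $y_{n_k}$ converges to some $y\in X$; by the pointwise-on-finite-sets characterisation of convergence in the full shift recalled just before the definition of a subshift, $y(g)=x(g)$ for every $g\in G$ (since eventually $g\in F_{n_k}$), whence $y=x\in X$, as required. The only mild subtlety is the use of countability of $G$ to produce the exhausting sequence; no amenability input is needed for this theorem.
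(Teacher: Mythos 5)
Your argument is correct and complete: both directions are handled properly, the forbidden-pattern set $\mathcal N=\bigcup_{F}\bigl(\mathcal A^F\setminus\mathcal L_F(X)\bigr)$ is the standard choice, and the compactness/exhaustion argument for the reverse inclusion is exactly the step that makes the theorem work. The paper states this theorem without proof (as the routine analogue of the classical $\mathbb Z^d$ fact), so there is nothing to compare against; your write-up, including the caveat that $\mathcal N$ must be such that the resulting set is nonempty to match the paper's definition of a subshift, supplies precisely the standard proof that the author omitted.
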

	
	We will use the following simple fact.
	\begin{fact}
		\label{fact_zawieranie_jezykow}
		If $Y$ is a proper subshift of $X$ (that is $Y\subsetneq X$), then $\mathcal L(Y)\subsetneq\mathcal L(X)$.
	\end{fact}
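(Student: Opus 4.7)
The plan is to exploit the fact that a subshift, being closed in the full shift, has the property that any element not in it is separated from it by a basic open set of the product topology, i.e. a cylinder determined by a finite set $F \subset G$.

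First, since $Y \subsetneq X$, I can pick some $x \in X \setminus Y$. The subshift $Y$ is compact, hence closed in $\mathcal{A}^G$, so $\mathcal{A}^G \setminus Y$ is an open neighborhood of $x$. By the definition of the product topology and the characterization of convergence recalled in the excerpt (agreement on each coordinate eventually), the sets of the form $[w] := \{z \in \mathcal{A}^G : z|_F = w\}$ with $F \in \mathcal{F}(G)$ and $w \in \mathcal{A}^F$ form a basis. Hence I can choose a finite set $F \in \mathcal{F}(G)$ such that the cylinder $[x|_F] = \{z \in \mathcal{A}^G : z|_F = x|_F\}$ satisfies $[x|_F] \cap Y = \emptyset$.

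Next I argue that $x|_F$ witnesses the strict inclusion. On the one hand, $x|_F \in \mathcal{L}_F(X) \subseteq \mathcal{L}(X)$ by the very definition of the language. On the other hand, if $x|_F$ were in $\mathcal{L}_F(Y)$, there would exist $y \in Y$ with $y|_F = x|_F$; but then $y \in [x|_F] \cap Y$, contradicting the choice of $F$. Therefore $x|_F \notin \mathcal{L}_F(Y)$, and in particular $x|_F \notin \mathcal{L}(Y)$ (since $\mathcal{L}_{F'}(Y) \cap \mathcal{A}^F = \emptyset$ whenever $F' \neq F$, simply because configurations on different domains are distinct objects). Combined with the obvious inclusion $\mathcal{L}(Y) \subseteq \mathcal{L}(X)$ inherited from $Y \subseteq X$, this gives $\mathcal{L}(Y) \subsetneq \mathcal{L}(X)$.

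There is no genuine obstacle here; the only mildly subtle point is that the author's definition of ``appears'' fixes the occurrence at the identity (namely $x|_F = \mathsf{C}$) rather than allowing arbitrary translates, so one might briefly worry that a shifted copy of $x|_F$ could appear in some $y \in Y$ without $x|_F$ itself lying in $\mathcal{L}_F(Y)$. However, $G$-invariance of $Y$ removes this concern: if $(gy)|_F = x|_F$ for some $y \in Y$ and $g \in G$, then $gy \in Y$ as well, so $x|_F$ still belongs to $\mathcal{L}_F(Y)$. Hence the cylinder separation argument is sufficient.
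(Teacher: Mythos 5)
Your proof is correct. The paper states this as a ``simple fact'' and gives no proof at all, so there is nothing to compare against; your cylinder-separation argument (using closedness of $Y$ and the fact that cylinders $[x|_F]$ form a basis of the product topology) is the standard way to establish it, and the side remark about shifted occurrences is harmless but unnecessary, since $\mathcal L_F(Y)$ is by definition just $\{y|_F : y\in Y\}$.
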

		
	Finally, we define the entropy of a $G$-subshift. The theorem below is a consequence of the Ornstein-Weiss Lemma (see \cite{LW}, Theorem 6.1).
	\begin{theorem}
		Let $X$ be a $G$-subshift. For any F{\o}lner sequence $(F_n)$ in $G$ the limit 
		$$\lim_{n\to \infty}\frac{1}{|F_n|}\log\left|\mathcal L_{F_n}(X)\right|$$
		exists, is finite, and does not depend on the choice of $(F_n)$. The common value $h(X)$ of the limit is called the \textup{entropy} of $X$.
	\end{theorem}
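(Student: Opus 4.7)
The plan is to reduce everything to the Ornstein--Weiss lemma as stated in \cite{LW}, Theorem 6.1, which asserts that if $f:\mathcal F(G)\to[0,\infty)$ is monotone, subadditive and invariant under the left action of $G$, then $\frac{f(F_n)}{|F_n|}$ converges along every Følner sequence to a common limit equal to the infimum $\inf_F \frac{f(F)}{|F|}$ (or something equivalent). So I would introduce
$$f(F) := \log |\mathcal L_F(X)|, \qquad F\in\mathcal F(G),$$
and check each of the three required properties for $f$.

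First, shift-invariance: for any $g\in G$ and $F\in\mathcal F(G)$ the map $\mathcal L_F(X)\to \mathcal L_{Fg^{-1}}(X)$ sending a configuration $\mathsf C = x|_F$ to $(gx)|_{Fg^{-1}}$ is well defined and bijective because $X$ is $G$-invariant, hence $|\mathcal L_{Fg^{-1}}(X)|=|\mathcal L_F(X)|$, giving invariance in the sense required by Ornstein--Weiss (after fixing whichever side-convention matches the shift action defined earlier, namely $(gx)(h)=x(hg)$). Second, monotonicity: if $F\subset F'$ then the restriction map $\mathcal L_{F'}(X)\to\mathcal L_F(X)$, $\mathsf C\mapsto \mathsf C|_F$, is surjective, so $f(F)\leqslant f(F')$. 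Third, subadditivity: for any $F,F'\in\mathcal F(G)$ the map $\mathcal L_{F\cup F'}(X)\to\mathcal L_F(X)\times\mathcal L_{F'}(X)$ given by $\mathsf C\mapsto(\mathsf C|_F,\mathsf C|_{F'})$ is injective, hence $|\mathcal L_{F\cup F'}(X)|\leqslant|\mathcal L_F(X)|\cdot|\mathcal L_{F'}(X)|$, i.e.\ $f(F\cup F')\leqslant f(F)+f(F')$.

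Finiteness of the limit is immediate: every configuration on $F$ is, by definition, an element of $\mathcal A^F$, so $|\mathcal L_F(X)|\leqslant |\mathcal A|^{|F|}$, which gives $\frac{f(F)}{|F|}\leqslant \log|\mathcal A|$ uniformly in $F$. Combined with the non-negativity of $f$, this forces the Ornstein--Weiss limit to lie in $[0,\log|\mathcal A|]$. Applying Ornstein--Weiss to $f$ yields existence of $\lim_{n\to\infty}\frac{1}{|F_n|}\log|\mathcal L_{F_n}(X)|$ for every Følner sequence $(F_n)$ together with independence of the choice of $(F_n)$, which is precisely the theorem.

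There is no serious obstacle here; the only thing that needs attention is matching the handedness of the Ornstein--Weiss lemma (stated for a left action) with the shift action convention $(gx)(h)=x(hg)$ used in the paper, so that the invariance property verified for $f$ is exactly the one the lemma assumes. Once that convention is checked, the three properties above plug directly into the lemma and conclude the proof.
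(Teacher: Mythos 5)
Your proposal is correct and is exactly the argument the paper intends: the paper gives no proof beyond citing the Ornstein--Weiss Lemma (\cite{LW}, Theorem 6.1), and your verification that $f(F)=\log|\mathcal L_F(X)|$ is monotone, subadditive and shift-invariant (with the correct handedness for the convention $(gx)(h)=x(hg)$) is precisely the routine check being left to the reader. The finiteness bound $|\mathcal L_F(X)|\leqslant|\mathcal A|^{|F|}$ is also right.
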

	
	We will use the following result, which was introduced in \cite{DFR} and is called the ``{infimum rule}''.
	\begin{theorem}
		\label{zasadainfimum}
		The following equality holds:
		$$h(X)=\inf_{F\in \mathcal F(G)}\frac{1}{|F|}\log\left|\mathcal{L}_F(X)\right|.$$
	\end{theorem}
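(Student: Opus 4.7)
The plan is to work with the function $\phi(F):=\log|\mathcal{L}_F(X)|$ and extract four elementary features: (i) right $G$-invariance $\phi(Fg)=\phi(F)$, since the shift $x\mapsto gx$ restricts to a bijection $\mathcal{L}_{Fg}(X)\to\mathcal{L}_F(X)$; (ii) monotonicity $\phi(A)\le\phi(B)$ for $A\subseteq B$, because restriction $\mathcal{L}_B(X)\to\mathcal{L}_A(X)$ is surjective; (iii) subadditivity $\phi(A\cup B)\le\phi(A)+\phi(B)$, from the injection $c\mapsto(c|_A,c|_B)$ into $\mathcal{L}_A(X)\times\mathcal{L}_B(X)$; and (iv) the crude bound $\phi(F)\le|F|\log|\mathcal{A}|$. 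The inequality $h(X)\ge\inf_{F}\phi(F)/|F|$ is then immediate, since for any Følner sequence $(F_n)$ every ratio $\phi(F_n)/|F_n|$ dominates the infimum, and the limit is precisely $h(X)$.

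For the reverse inequality I would fix an arbitrary $F_0\in\mathcal{F}(G)$ and $\varepsilon>0$, and aim to show $h(X)\le\phi(F_0)/|F_0|+\varepsilon$. The tool is the Ornstein--Weiss quasi-tiling theorem applied with $F_0$ as the base shape: it produces finitely many auxiliary shapes $T_1,\dots,T_N\subset G$, each expressible as a disjoint union of $G$-translates of $F_0$ (so that (i)+(iii) iterated give $\phi(T_j)\le(|T_j|/|F_0|)\phi(F_0)$), together with a threshold $\delta>0$ such that every sufficiently invariant $F\in\mathcal{F}(G)$ admits a family of pairwise disjoint translates $\{T_j g_{j,k}\}\subseteq F$ covering at least $(1-\varepsilon)|F|$ of the points of $F$. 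For a Følner sequence $(F_n)$, all sufficiently large $F_n$ meet the invariance requirement, and subadditivity on the disjoint translates together with (iv) on the leftover yields
\[
\phi(F_n)\;\le\;\sum_{j,k}\phi(T_j g_{j,k})+|F_n\setminus\textstyle\bigsqcup_{j,k}T_jg_{j,k}|\log|\mathcal{A}|\;\le\;\frac{\phi(F_0)}{|F_0|}|F_n|+\varepsilon|F_n|\log|\mathcal{A}|.
\]
Dividing by $|F_n|$, passing $n\to\infty$ and then $\varepsilon\to0$, I would obtain $h(X)\le\phi(F_0)/|F_0|$; since $F_0$ was arbitrary, $h(X)\le\inf_{F}\phi(F)/|F|$, which combined with the easy direction gives equality.

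The main obstacle is the quasi-tiling step itself. A single shape $F_0$ generally fails to tile (even approximately) an arbitrary Følner set, so the Ornstein--Weiss theorem must be invoked with several inductively constructed shapes obtained by packing translates of $F_0$ into successively larger regions. Ensuring that each such $T_j$ remains a disjoint union of $F_0$-translates --- so that subadditivity delivers the bound $\phi(T_j)/|T_j|\le\phi(F_0)/|F_0|$ without loss --- and that the uncovered remainder is honestly of size at most $\varepsilon|F_n|$ is the combinatorial heart of the argument, and it is here that the full machinery developed in \cite{DFR} does the real work; the rest of the proof is a routine assembly of properties (i)--(iv).
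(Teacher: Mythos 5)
The paper itself does not prove this statement; it only cites \cite{DFR}, where the result is established via Shearer's inequality. Your easy direction and properties (i)--(iv) are fine, but the hard direction of your proposal rests on a step that is not merely difficult --- it is false in general. You ask the Ornstein--Weiss machinery to produce shapes $T_j$, each a \emph{disjoint} union of translates of an arbitrary prescribed $F_0$, which quasi-tile every sufficiently invariant set up to a proportion $\varepsilon$. Ornstein--Weiss quasi-tiling does not deliver this: its tiles are themselves highly invariant sets, not unions of translates of a given shape, and the packing density of an arbitrary $F_0$ by disjoint translates can be bounded away from $1$. Already in $G=\mathbb{Z}$ with $F_0=\{0,1,3\}$, the difference set of $F_0$ is $\{-3,\dots,3\}$, so the starting points of any family of pairwise disjoint translates must be at least $4$ apart, and no packing covers more than $3/4$ of a long interval. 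Hence for $\varepsilon<1/4$ the covering you need does not exist, and your chain of inequalities cannot even get started; the "combinatorial heart" you defer to \cite{DFR} is an impossibility, not a technicality.

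The actual route in \cite{DFR} replaces disjoint tilings by coverings with constant multiplicity: the family $\{F_0g : g\in F_0^{-1}F_n\}$ covers each point of $F_n$ exactly $|F_0|$ times, and $|F_0^{-1}F_n|/|F_n|\to 1$ by the F{\o}lner property. Plain subadditivity (your (iii)) is useless for such a covering, since it does not divide out the multiplicity; the key input is that $\phi(F)=\log|\mathcal{L}_F(X)|$ satisfies Shearer's inequality, $\phi(F)\leqslant \frac{1}{k}\sum_{K\in\mathcal K}\phi(K)$ whenever every point of $F$ lies in at least $k$ members of $\mathcal K$. This is a genuinely stronger combinatorial fact (proved in \cite{DFR} via the Shannon-entropy form of Shearer's inequality), and with it one gets $\phi(F_n)\leqslant \frac{|F_0^{-1}F_n|}{|F_0|}\,\phi(F_0)$, hence $h(X)\leqslant \phi(F_0)/|F_0|$ for every $F_0$. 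So the missing idea in your proposal is precisely Shearer's inequality; without it, no amount of tiling combinatorics will close the argument.
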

	
	\section{The results}
	
	Throughout this section $G$ is a fixed countable amenable group, $\mathcal{A}$ is a finite alphabet and $X\subset \mathcal{A}^G$ is a $G$-subshift. Additionally, let $H$ denote a fixed normal subgroup of $G$. 
	\begin{definition}
		A \textit{projection of $X$ onto} $H$ is a subset $X_H$ of $\mathcal A^H$ defined as follows:
		$$X_H=\left\{x'\in\mathcal A^H: \exists _{x\in X} \textrm{~}x'= x|_H\right\}.$$
	\end{definition}
	By Fact \ref{fact_amenable_nor_subgr_i_quotient} we know that $H$ is amenable, so we can formulate the following theorem.
	\begin{theorem}
		\label{th:X_Htoshift}
		$X_H$ is an $H$-subshift.
	\end{theorem}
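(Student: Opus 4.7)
The plan is to verify the three defining properties of a $H$-subshift in turn: that $X_H$ is nonempty, compact (closed in $\mathcal{A}^H$), and invariant under the shift action of $H$. The amenability of $H$, which is needed for $X_H$ to qualify as an $H$-subshift according to our framework, is already guaranteed by Fact \ref{fact_amenable_nor_subgr_i_quotient} since $H$ is a normal subgroup of the amenable group $G$.

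Nonemptiness is immediate: since $X$ is nonempty, picking any $x \in X$ yields $x|_H \in X_H$. For $H$-invariance, I would take $x' \in X_H$ with witness $x \in X$ (so $x' = x|_H$), and fix $h \in H$. The key observation is that for $k \in H$ we have
$$(hx')(k) = x'(kh) = x(kh) = (hx)(k),$$
where the final equality uses the definition of the shift action and the fact that $kh \in H$. Hence $hx' = (hx)|_H$, and since $hx \in X$ by $G$-invariance of $X$, we get $hx' \in X_H$. Applying the same argument to $h^{-1}$ gives the reverse inclusion, so $hX_H = X_H$.

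The substantive step is closedness of $X_H$ in $\mathcal{A}^H$. I would proceed sequentially: take $x'_n \in X_H$ converging to some $x' \in \mathcal{A}^H$, and for each $n$ pick a witness $x_n \in X$ with $x_n|_H = x'_n$. Since $X$ is compact in $\mathcal{A}^G$, we may pass to a subsequence and assume $x_n \to x$ for some $x \in X$. The restriction map $\mathcal{A}^G \to \mathcal{A}^H$, $y \mapsto y|_H$, is continuous (pointwise convergence on $G$ forces pointwise convergence on $H$), so $x_n|_H \to x|_H$. Combined with $x_n|_H = x'_n \to x'$ and Hausdorffness of $\mathcal{A}^H$, this yields $x|_H = x'$, hence $x' \in X_H$.

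I do not anticipate a real obstacle; the argument is a standard exercise in exploiting compactness of $X$ and the natural compatibility of the shift actions on $G$ and $H$. The only subtlety worth being careful about is ensuring that the shift action of $h \in H$ on a function defined only on $H$ is consistent with the action of $h$ on a function defined on all of $G$, which rests precisely on the fact that $H$ is a subgroup (so $kh \in H$ whenever $k, h \in H$); normality of $H$ is not needed for this theorem, though it becomes relevant later in the paper.
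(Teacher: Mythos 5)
Your proposal is correct and follows essentially the same route as the paper: the identity $hx' = (hx)|_H$ for the shift action, the inclusion-in-both-directions trick via $h^{-1}$ for invariance, and a witness-sequence argument using compactness of $X$ together with continuity of the restriction map for the topological part (the paper phrases this as sequential compactness of $X_H$ rather than closedness in $\mathcal{A}^H$, but the content is identical). Your closing remark that normality of $H$ is not needed here is also accurate.
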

	\begin{proof}
		\textbf{$H$-invariance}. Let $h\in H$ and $x'\in X_H$. By the definition, there exists $x\in X$ such that $x|_H=x'$. For any $\tilde h\in H$ we have:
		$$hx'(\tilde h)=x'(\tilde hh)=x(\tilde hh)=hx(\tilde h),$$
		hence $hx'=(hx)|_H$. The subshift $X$ is $G$-invariant, so $hx \in X$, and thus $hx'\in X_H$. Therefore we get:
		$$\forall_{h\in H}~~hX_H\subset X_H.$$
		But that means that for any $h\in H$ also $h^{-1}X_H\subset X_H$. Hence $X_H=h(h^{-1}X_H)\subset hX_H$, for any $h\in H$.
		
		\textbf{Compactness}. Let $(x'_n)_{n\geqslant1}$ be a sequence in $X_H$. We know that for any $n$ there exists $x_n\in X$ such that $x'_n=x_n|_H$. By the compactness of $X$ there is a subsequence $x_{n_k}$ convergent to some $x\in X$. Then for each $g\in G$ $x_{n_k}(g)$ eventually equals $x(g)$, in particular it holds for each $g\in H$. This gives us the convergence $x'_{n_k}\rightarrow x|_H\in X_H$.
	\end{proof}

    The following definition introduces our main object of interest.
	\begin{definition}
		The entropy of $X_H$ (as a subshift of $\mathcal A^H$) is the \textit{$H$-projectional entropy of $X$}.
	\end{definition}
	Below we present a method of constructing a subshift of $\mathcal A^G$ from $X_H$ in a ``product-like'' way.
	\begin{definition}
		A \textit{transversal for $H$} is a subset $M$ of $G$ such that for any coset $gH$ we have $|M\cap gH|=1$ (i.e. $M$ consists of representatives of the cosets).
	\end{definition}
    For a transversal $M$ for $H$ we define a function $\varphi_M:\left(X_H\right)^{G/H}\rightarrow \mathcal A^G$ in the following way. For any $g\in G$ there exists exactly one $h\in H$ and exactly one $m\in M$ such that $g=hm$. We take
		$$\varphi_M\left((x^\gamma)_{\gamma\in G/H}\right)(g)=x^{Hm}(h),$$
		where $x^\gamma \in X_H$ for any coset $\gamma\in G/H$.
	\begin{theorem}
		~
		\begin{enumerate} 
			\item For each transversal $M$ the function $\varphi_M$ is an injection.
			\item For any two transversals $M_1, M_2$ we have $$\varphi_{M_1}\left(\left(X_H\right)^{G/H}\right)=\varphi_{M_2}\left(\left(X_H\right)^{G/H}\right).$$
   We will denote the common image by $X_H^{G/H}$.
			\item For each transversal $M$ the function $\varphi_M$ is continuous with respect to the product topology on $\left(X_H\right)^{G/H}$.
			\item $X_H^{G/H}$ is a $G$-subshift.
		\end{enumerate}
	\end{theorem}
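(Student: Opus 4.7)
Throughout, I rely on the unique decomposition $g=hm$ with $h\in H$, $m\in M$, which is valid because $M$ is a transversal and because normality of $H$ makes left and right cosets coincide, together with the fact from Theorem~\ref{th:X_Htoshift} that $X_H$ is $H$-invariant. Parts (1) and (3) are routine consequences. For (1), if $\varphi_M((x^\gamma))=\varphi_M((y^\gamma))$, fix $\gamma\in G/H$ and let $m=M\cap\gamma$; evaluating at $g=hm$ for each $h\in H$ gives $x^\gamma(h)=y^\gamma(h)$, so the tuples coincide coordinatewise. For (3), recall that $x_n\to x$ in $\mathcal A^G$ iff $x_n(g)$ is eventually equal to $x(g)$ for each $g\in G$; hence product-topology convergence $(x^\gamma_n)_\gamma\to(x^\gamma)_\gamma$ translates, via the formula $\varphi_M((x^\gamma_n))(hm)=x^{Hm}_n(h)$, directly into convergence of the images in $\mathcal A^G$.

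The content of (2) lies in realigning factors by $H$-shifts. Given transversals $M_1,M_2$ and a coset $\gamma$, let $m_i=M_i\cap\gamma$ and let $h_\gamma\in H$ be the unique element with $m_1=h_\gamma m_2$. For $(x^\gamma)\in(X_H)^{G/H}$, set $y^\gamma:=h_\gamma^{-1}\cdot x^\gamma$, which belongs to $X_H$ by its $H$-invariance. Writing any $g\in G$ as $g=hm_1=(hh_\gamma)m_2$, a direct calculation with the shift action gives $\varphi_{M_1}((x^\gamma))(g)=x^\gamma(h)=y^\gamma(hh_\gamma)=\varphi_{M_2}((y^\gamma))(g)$, and symmetry yields equality of the two images.

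Part (4) is where I expect the real work. Compactness and nonemptiness of $X_H^{G/H}$ follow from Tikhonov applied to $(X_H)^{G/H}$ together with continuity of $\varphi_M$. For $G$-invariance, fix $g_0\in G$ and $x=\varphi_M((x^\gamma))$. For each $m\in M$ uniquely decompose $m g_0=h_m m_\star$ with $h_m\in H$ and $m_\star\in M$. Normality of $H$ makes $G/H$ a group, and the assignment $Hm\mapsto Hm_\star$ is right translation by the coset $g_0H$, hence a bijection of $G/H$; so $y^{Hm}:=h_m\cdot x^{Hm_\star}\in X_H$ defines a well-formed tuple $(y^\gamma)\in(X_H)^{G/H}$. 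For $g=hm$ one then checks
\[
\varphi_M((y^\gamma))(g)=y^{Hm}(h)=x^{Hm_\star}(hh_m)=x(hh_m m_\star)=x(hmg_0)=(g_0x)(g),
\]
so $g_0x=\varphi_M((y^\gamma))\in X_H^{G/H}$. The delicate step, and what I consider the main obstacle, is keeping the left-vs-right conventions for the shift action and the coset decompositions consistent and using normality in the right place when identifying the permutation $Hm\mapsto Hm_\star$.
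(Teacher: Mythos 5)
Your proposal is correct and follows essentially the same route as the paper: the same $H$-shift realignment $y^\gamma=m_2m_1^{-1}\cdot x^\gamma$ for part (2), and for part (4) the same retupling $y^{Hm}=h_m\cdot x^{Hm_\star}$ with $mg_0=h_mm_\star$, which is literally the paper's $y^\gamma=\left(m^{\gamma}g(m^{\gamma\cdot gH})^{-1}\right)x^{\gamma\cdot gH}$. The only cosmetic differences are that you prove injectivity directly rather than by contrapositive and that you sketch the continuity argument the paper omits.
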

	Let us note that by $\left(X_H\right)^{G/H}$ we denote the Cartesian power of the set $X_H$, while $X_H^{G/H}$ is a subset of $\mathcal A^G$.
	
	\begin{proof}
		We will use the notation $\left(x^\gamma\right)=\left(x^\gamma\right)_{\gamma\in G/H}$.
		\begin{enumerate}
			\item Let $\left(x^\gamma\right)\neq\left(y^\gamma\right)$. Then there exists $\gamma_0\in G/H$ such that $x^{\gamma_0}\neq y^{\gamma_0}$, which means that there exists $h_0\in H$ such that $x^{\gamma_0}(h_0)\neq y^{\gamma_0}(h_0)$. Let $m_0\in M\cap \gamma_0$ (in fact $M\cap \gamma_0=\{m_0\}$). Then we have
			$$\varphi_M\left(\left(x^\gamma\right)\right)(h_0m_0)=x^{\gamma_0}(h_0)\neq y^{\gamma_0}(h_0)=\varphi_M\left(\left(y^\gamma\right)\right)(h_0m_0),$$
			so $\varphi_M\left(\left(x^\gamma\right)\right)\neq\varphi_M\left(\left(y^\gamma\right)\right)$. 
			
			\item Let $M_1$, $M_2$ be transversals for $H$. By the symmetry it suffices to show that
			$$\varphi_{M_1}\left(\left(X_H\right)^{G/H}\right)\subset\varphi_{M_2}\left(\left(X_H\right)^{G/H}\right).$$
			Fix $\left(x^\gamma\right)\in\left(X_H\right)^{G/H}$. For any $\gamma\in G/H$ let $m_i^\gamma$ be the only element of $ M_i\cap\gamma$, $i=1,2$, and take $y^\gamma=m_2^{\gamma}\left(m_1^{\gamma}\right)^{-1}x^\gamma\in \mathcal A^H$. It is well defined, because $m_2^{\gamma}\left(m_1^{\gamma}\right)^{-1}\in H$. We have $y^\gamma\in X_H$ for any $\gamma$, since $X_H$ is $H$-invariant.
			
			Fix $g\in G$ and take $\gamma_0=gH$. There exists exactly one pair $h_1,h_2\in H$ such that $g=h_1m_1^{\gamma_0}=h_2m_2^{\gamma_0}$. Note that then $h_1=h_2m_2^{\gamma_0}\left(m_1^{\gamma_0}\right)^{-1}$. We have
			$$\varphi_{M_1}\left(\left(x^\gamma\right)\right)(g)=x^{\gamma_0}(h_1)=x^{\gamma_0}\left(h_2m_2^{\gamma_0}\left(m_1^{\gamma_0}\right)^{-1}\right)$$
			$$=m_2^{\gamma_0}\left(m_1^{\gamma_0}\right)^{-1}x^{\gamma_0}(h_2)=y^{\gamma_0}(h_2)=\varphi_{M_2}\left(\left(y^\gamma\right)\right)(g).$$
			Since $g$ and $(x^\gamma)$ were arbitrary, we get that			
			$$
            \forall_{(x^\gamma)\in (X_H)^{G/H}} ~ \exists_{(y^\gamma)\in(X_H)^{G/H}}~~\varphi_{M_1}((x^\gamma))=\varphi_{M_2}((y^\gamma)),
            $$
			thus $\varphi_{M_1}\left(\left(X_H\right)^{G/H}\right)\subset\varphi_{M_2}\left(\left(X_H\right)^{G/H}\right).$
			
			\item We skip the proof of continuity, as it boils down to verifying that the preimage of a cylinder is also a cylinder.
			\item \textbf{$G$-invariance:} By the same argument as in the proof of Theorem \ref{th:X_Htoshift} it suffices to show that for any $g\in G$ we have $gX_H^{G/H}\subset X_H^{G/H}$.
			
			Fix a transversal $M$. For any $g\in G$ by $h^g$ we denote the element of $H$ such that $g=h^gm^{gH}$, where $m^{gH}$ is the only element of $M\cap gH$. 
			
			Fix $g\in G$ and $x=\varphi_M\left(\left(x^\gamma\right)\right)\in X_H^{G/H}$. For any $\tilde{g}\in G$ we have
			$$h^{\tilde{g}g}=\tilde{g}g(m^{\tilde{g}gH})^{-1}=h^{\tilde{g}}m^{\tilde{g}H}g(m^{\tilde{g}gH})^{-1}.$$
			Therefore $m^{\tilde{g}H}g(m^{\tilde{g}gH})^{-1}\in H$ and
			\begin{multline*}
				gx(\tilde{g})=x(\tilde{g}g)=x\left(h^{\tilde{g}g}m^{\tilde{g}gH}\right)\\
				=x^{\tilde{g}gH}\left(h^{\tilde{g}g}\right)=x^{\tilde{g}H\cdot gH}\left(h^{\tilde{g}}m^{\tilde{g}H}g(m^{\tilde{g}gH})^{-1}\right)
				\\=\left(m^{\tilde{g}H}g(m^{\tilde{g}H\cdot gH})^{-1}\right)x^{\tilde{g}H\cdot gH}\left(h^{\tilde{g}}\right),
			\end{multline*}
			where $\tilde{g}H\cdot gH = \tilde{g}gH $ denotes the product of cosets in the quotient group $G/H$.
			
			Now, for each $\gamma\in G/H$ define: 
			$$y^\gamma:=\left(m^{\gamma}g(m^{\gamma\cdot gH})^{-1}\right)x^{\gamma\cdot gH}\in X_H.$$ 
			For any $\tilde g\in G$ we have 
			$$\varphi_M\left(\left(y^\gamma\right)\right)(\tilde g)=\left(m^{\tilde{g}H}g\left(m^{\tilde gH\cdot gH}\right)^{-1}\right)x^{\tilde gH\cdot gH}(h^{\tilde{g}})=gx(\tilde{g}).$$
			Hence we have $gx\in X_H^{G/H}$. Since $x$ was arbitrary, $gX_H^{G/H}\subset X_H^{G/H}$.
			
			\textbf{Compactness} is clear, because $X_H^{G/H}$ is an image of the compact set $(X_H)^{G/H}$ {under} the continuous function $\varphi_M$. 
			
		\end{enumerate}
	\end{proof}
	From now on $M$ is a fixed transversal for $H$ and $\varphi=\varphi_M$
	\begin{fact}
		\label{faktozawieraniu}
		$X\subset X_H^{G/H}$
	\end{fact}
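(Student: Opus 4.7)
The plan is to show that every $x\in X$ lies in the image of $\varphi=\varphi_M$, i.e.\ to produce a tuple $(x^\gamma)_{\gamma\in G/H}\in (X_H)^{G/H}$ satisfying $\varphi_M((x^\gamma))=x$. For each coset $\gamma\in G/H$, let $m^\gamma$ denote the unique element of $M\cap\gamma$, and define the candidate $x^\gamma:=(m^\gamma x)|_H\in\mathcal A^H$. This is the only reasonable choice: $\varphi_M$ reads off the $H$-restriction of $x$ around each representative $m^\gamma$, so the components of the preimage must be exactly these $H$-restrictions of the appropriately shifted copies of $x$.

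First I would check that $x^\gamma\in X_H$ for every $\gamma$. Since $X$ is $G$-invariant and $m^\gamma\in G$, the shifted point $m^\gamma x$ belongs to $X$, so its restriction to $H$ lies in $X_H$ by the definition of the projection.

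Next I would verify the equality $\varphi_M((x^\gamma))=x$ pointwise. Fix $g\in G$ and write $g=hm^{gH}$ with the unique $h\in H$. By the definition of $\varphi_M$, $\varphi_M((x^\gamma))(g)=x^{gH}(h)$. Unfolding the definitions of $x^{gH}$ and of the shift action $(m^{gH}x)(h)=x(hm^{gH})$, the right-hand side equals $x(hm^{gH})=x(g)$, as required.

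I do not expect a real obstacle here, as the argument is bookkeeping from the definitions. The one point to watch is the shift-action convention $(gx)(h)=x(hg)$ used in the paper: it is precisely this right-action-type convention that makes the natural choice $x^\gamma=(m^\gamma x)|_H$ succeed without introducing an inverse, so I would be careful to keep the order of multiplication consistent throughout the verification.
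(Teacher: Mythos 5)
Your proposal is correct and matches the paper's proof exactly: the paper also defines $x^{Hm}:=(mx)|_H$ for $m\in M$ and verifies $x^{Hm}(h)=x(hm)$, which is precisely your pointwise check. You merely spell out the membership $x^\gamma\in X_H$ and the role of the shift convention more explicitly than the paper does.
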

	\begin{proof}
		Take any $x\in X$ and define $x^{Hm}:=(mx)|_H$ for any $m\in M$. Then $x=\varphi\left(\left( x^\gamma\right)\right)$ (we have $x^{Hm}(h)=mx(h)=x(hm)$).
	\end{proof}
	
	\begin{lemma}
		Let $F_1,\ldots,F_n\in \mathcal F(G)$ be subsets of different cosets. Then for $F=\bigcupdot_{i=1}^n F_i$ we have
		$$\mathcal L_F (X_H^{G/H})=\{\mathsf C\in \mathcal A^F: \forall_{i=1,\ldots,n}~ \mathsf C|_{F_i}\in \mathcal L_{F_i}(X_H^{G/H})\},$$
		and hence
		$$\left|\mathcal L_F (X_H^{G/H})\right|=\prod_{i=1}^n \left|\mathcal L_{F_i}(X_H^{G/H})\right|.$$
	\end{lemma}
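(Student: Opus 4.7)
The plan is to prove the set equality by two inclusions, and then deduce the cardinality formula from it.

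The inclusion $\mathcal L_F(X_H^{G/H})\subseteq\{\mathsf C\in\mathcal A^F:\forall_i\,\mathsf C|_{F_i}\in\mathcal L_{F_i}(X_H^{G/H})\}$ is automatic: if $\mathsf C=x|_F$ for some $x\in X_H^{G/H}$, then $\mathsf C|_{F_i}=x|_{F_i}\in\mathcal L_{F_i}(X_H^{G/H})$ for each $i$. No structural information is needed here.

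For the nontrivial inclusion $\supseteq$, I would use the product-type structure of $X_H^{G/H}$ provided by $\varphi$. Suppose $\mathsf C\in\mathcal A^F$ satisfies $\mathsf C|_{F_i}\in\mathcal L_{F_i}(X_H^{G/H})$ for each $i$, and pick witnesses $y_i=\varphi\bigl((y_i^\gamma)_{\gamma\in G/H}\bigr)\in X_H^{G/H}$ with $y_i|_{F_i}=\mathsf C|_{F_i}$. Let $\gamma_i\in G/H$ be the (unique) coset containing $F_i$; these are pairwise distinct by hypothesis. Writing each $g\in F_i$ uniquely as $g=h\,m^{\gamma_i}$ with $h\in H$ and $m^{\gamma_i}\in M\cap\gamma_i$, the definition of $\varphi$ gives $\mathsf C(g)=y_i(g)=y_i^{\gamma_i}(h)$. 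Thus $\mathsf C|_{F_i}$ is determined solely by the fiber $y_i^{\gamma_i}\in X_H$ over $\gamma_i$. Since $X\neq\emptyset$, also $X_H\neq\emptyset$, so I can fix any $z\in X_H$ and define a new tuple $(x^\gamma)_{\gamma\in G/H}\in(X_H)^{G/H}$ by
\[
x^\gamma:=\begin{cases}y_i^{\gamma_i},&\gamma=\gamma_i\text{ for some }i,\\ z,&\text{otherwise.}\end{cases}
\]
The disjointness of the $\gamma_i$ ensures this is well defined. Setting $x:=\varphi\bigl((x^\gamma)\bigr)\in X_H^{G/H}$ and unwinding the definition of $\varphi$ on each $F_i$ exactly as above, I get $x|_{F_i}=\mathsf C|_{F_i}$ for every $i$, and hence $x|_F=\mathsf C$, proving $\mathsf C\in\mathcal L_F(X_H^{G/H})$.

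The cardinality formula then follows since the disjoint-union hypothesis yields a bijection $\mathcal A^F\to\prod_{i=1}^n\mathcal A^{F_i}$, $\mathsf C\mapsto(\mathsf C|_{F_1},\ldots,\mathsf C|_{F_n})$, which by the established characterization restricts to a bijection $\mathcal L_F(X_H^{G/H})\to\prod_{i=1}^n\mathcal L_{F_i}(X_H^{G/H})$. There is no real obstacle beyond careful bookkeeping: the only substantive point is that fibers of $\varphi$ indexed by distinct cosets can be chosen \emph{independently} inside $X_H$, which is precisely the product-like structure built into $X_H^{G/H}$, and this is what makes the $\supseteq$ direction work.
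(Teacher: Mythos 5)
Your proposal is correct and follows essentially the same route as the paper: the trivial inclusion by restriction, and the reverse inclusion by gluing the fibers $y_i^{\gamma_i}$ over the distinct cosets into a single tuple in $(X_H)^{G/H}$ and applying $\varphi$ (the paper phrases this as ``by the definition there exists $x\in X_H^{G/H}$ with $x|_{Hm_i}=x_i|_{Hm_i}$ for all $i$'', which is exactly your construction made slightly less explicit). Your added detail of filling the remaining coordinates with a fixed $z\in X_H$ and of exhibiting the restriction map as a bijection for the cardinality count is a harmless elaboration, not a different argument.
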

	\begin{proof}
        It is obvious that the set on the right hand side contains $\mathcal L_F (X_H^{G/H})$. To see the opposite inclusion, take any configurations $\mathsf C_i\in \mathcal L_{F_i}(X_H^{G/H})$, $i=1,\ldots,n$, and let configuration $\mathsf C$ on $F$ be such that for any $i$ we have $\mathsf C|_{F_i}=\mathsf C_i$. For any $i$ there exists $x_i=\varphi((x_i^\gamma))\in X_H^{G/H}$ such that $x_i|_{F_i}=\mathsf C_i$. Let $m_i\in M$ be such that $F_i\subset Hm_i$. By the definition there exists $x\in X_H^{G/H}$ such that for any $i$ we have $(m_ix)|_{H}=x_i^{Hm_i}$, which is equivalent to $x|_{Hm_i}=x_i|_{Hm_i}$. Hence we get that
		$$\forall_{i=1,\ldots,n}~~ x|_{F_i}=x_i|_{F_i}=\mathsf C_i,$$
		so $\mathsf C=x|_F\in\mathcal L_F(X_H^{G/H})$.
	\end{proof}
	\noindent The proof of the following theorem was suggested by Tomasz Downarowicz.
	\begin{theorem}
		\label{thm:entropiaX_H^G/H}
		\[h(X_H^{G/H})=h(X_H)\]
	\end{theorem}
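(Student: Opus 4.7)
The plan is to bypass any explicit construction of a Følner sequence in $G$ and instead apply the infimum rule (Theorem \ref{zasadainfimum}) directly to both $X_H$ (as an $H$-subshift) and $X_H^{G/H}$ (as a $G$-subshift). The inequalities $h(X_H^{G/H}) \leq h(X_H)$ and $h(X_H^{G/H}) \geq h(X_H)$ will be established separately, each reducing to a purely combinatorial identification of languages.

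For the upper bound, part (2) of the theorem defining $X_H^{G/H}$ lets me assume without loss of generality that $e \in M$ (the representative of the coset $H$ is chosen to be the identity). Then any $\tilde F \in \mathcal F(H)$ belongs to $\mathcal F(G)$ as well, and it lies entirely in the single coset $He = H$. Unpacking the definition of $\varphi_M$ yields $x(h) = x^H(h)$ for every $h \in \tilde F$ and every $x = \varphi((x^\gamma)) \in X_H^{G/H}$, and since the component $x^H$ can be any element of $X_H$, one obtains $\mathcal L_{\tilde F}(X_H^{G/H}) = \mathcal L_{\tilde F}(X_H)$. Applying the infimum rule on each side then gives
\[
h(X_H^{G/H}) \;\leq\; \inf_{\tilde F \in \mathcal F(H)} \frac{1}{|\tilde F|} \log |\mathcal L_{\tilde F}(X_H)| \;=\; h(X_H).
\]

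For the lower bound, I take an arbitrary $F \in \mathcal F(G)$ and decompose it as $F = \bigsqcup_i F_i$ with $F_i = F \cap Hm_i$ for distinct $m_i \in M$. The preceding lemma gives $|\mathcal L_F(X_H^{G/H})| = \prod_i |\mathcal L_{F_i}(X_H^{G/H})|$. The assignment $\mathsf C \mapsto \mathsf C'$ defined by $\mathsf C'(h) = \mathsf C(hm_i)$ is a bijection identifying $\mathcal L_{F_i}(X_H^{G/H})$ with $\mathcal L_{F_i m_i^{-1}}(X_H)$ (via the same argument as in the upper bound, applied to the component $x^{Hm_i}$), and since $F_i m_i^{-1} \subset H$ with $|F_i m_i^{-1}| = |F_i|$, the infimum rule applied to $X_H$ yields $\log |\mathcal L_{F_i m_i^{-1}}(X_H)| \geq h(X_H) \cdot |F_i|$. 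Summing over $i$ and dividing by $|F|$ then produces $\frac{1}{|F|}\log |\mathcal L_F(X_H^{G/H})| \geq h(X_H)$, and taking the infimum over $F$ closes the argument.

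I do not foresee a single hard step in this approach; the main observation is that the infimum rule makes an explicit Følner construction in $G$ unnecessary. A direct Følner-based argument would itself be the main obstacle, as building a Følner sequence in $G$ from Følner sequences in $H$ and $G/H$ requires controlling how conjugation by elements of $G$ distorts subsets of the normal subgroup $H$, since the extension $1 \to H \to G \to G/H \to 1$ need not split.
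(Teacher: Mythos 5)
Your proposal is correct and follows essentially the same route as the paper: both directions use the infimum rule, the upper bound via restricting the infimum to $\mathcal F(H)$ and identifying $\mathcal L_{\tilde F}(X_H^{G/H})$ with $\mathcal L_{\tilde F}(X_H)$, and the lower bound via the coset decomposition $F=\bigsqcup_i F_i$ together with the product formula from the preceding lemma and the translation bijection $\mathcal L_{F_i}(X_H^{G/H})\cong\mathcal L_{F_im_i^{-1}}(X_H)$. The only cosmetic difference is your explicit normalization $e\in M$ and the slightly more detailed justification of the language identifications, which the paper leaves implicit.
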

	
	\begin{proof}
		First we prove that {$h(X_H^{G/H})\leqslant h(X_H)$}. By using Theorem \ref{zasadainfimum} we get
		\begin{multline*}
			h(X_H^{G/H})=\inf_{F\in \mathcal F(G)}\frac{1}{|F|}\log\left|\mathcal{L}_F(X_H^{G/H})\right|\leqslant \inf_{F\in \mathcal F(H)}\frac{1}{|F|}\log\left|\mathcal{L}_F(X_H^{G/H})\right|\\
			=\inf_{F\in \mathcal F(H)}\frac{1}{|F|}\log\left|\mathcal{L}_F(X_H)\right|=h(X_H).
		\end{multline*}
		To prove $h(X_H^{G/H})\geqslant h(X_H)$, take any $F\in \mathcal F(G)$. Let $m_1,\ldots,m_n$ be all elements of $M$ such that $F_i:=F\cap Hm_i\neq \varnothing$ for $i=1,\ldots,n$. It is obvious that $F=\bigcupdot _{i=1}^nF_i$. By the preceding lemma and the definition of $X_H^{G/H}$ we have
		$$\left|\mathcal{L}_F(X_H^{G/H})\right|=\prod_{i=1}^n\left|\mathcal{L}_{F_i}(X_H^{G/H})\right|=\prod_{i=1}^n\left|\mathcal{L}_{F_im_i^{-1}}(X_H)\right|,$$
		so by using Theorem \ref{zasadainfimum} 
		\begin{multline*}
			\frac{1}{|F|}\log\left|\mathcal{L}_F(X_H^{G/H})\right|=\frac{1}{|F|}\sum_{i=1}^n\log\left|\mathcal{L}_{F_im_i^{-1}}(X_H)\right|\\
			=\frac{1}{\sum_{i=1}^n|F_i|}\sum_{i=1}^n|F_i|\frac{1}{|F_im_i^{-1}|}\log\left|\mathcal{L}_{F_im_i^{-1}}(X_H)\right|\\
			\geqslant \frac{1}{\sum_{i=1}^n|F_i|}\sum_{i=1}^n|F_i|h(X_H)=h(X_H).
		\end{multline*}
	\end{proof}
	
    Now we turn to the main question---when does the equality of the entropies of $X$ and $X_H$ imply $X=X_H^{G/H}$? In \cite{JKM} and \cite{S} the counterexamples are given that such implication fails in general. As it was done in these papers, in order to obtain a positive result we additionally assume a proper mixing condition.
	\begin{definition}
		Let $D\in\mathcal F(G)$ contain the neutral element of $G$. We say that $X$ is \textit{$D$-strongly irreducible}, if for any finite $B_1,B_2\subset G$ such that $DB_1\cap B_2=\varnothing$ we have
		$$\forall_{x,y\in X}\exists_{z\in X}~~z|_{B_1}=x|_{B_1} \land z|_{B_2}=y|_{B_2}.$$
	\end{definition}
 We remark that in case of multidimensional subshifts this condition is stronger than the uniform filling property, which was sufficient for results of \cite{S}.

    From now on $D\in\mathcal F(G)$ is fixed.
	
    In the course of proving the main theorem we will exploit the notion of the lower Banach density. We briefly recall its definition and formulate an auxiliary lemma. 

        \begin{definition}
		Let $A\subset G$. The \textit{lower Banach density} of $A$ is a number $$\underline{{d}}(A):=\sup_{F\in \mathcal F(G)}~\inf_{g\in G}\frac{|A\cap Fg|}{|F|}.$$
	\end{definition}
		It is known that $\underline d(A)=\lim_{n\to\infty}\inf_{g\in G} \frac{|A\cap F_n g|}{|F_n|}$, where $(F_n)$ is an arbitrary F{\o}lner sequence in $G$.
	
	\begin{lemma}
		\label{lematozbiorzeP}
		For any finite $B\subset G$ there exists $P\subset G$ such that:
		\begin{enumerate}
			\item $P$ has positive lower Banach density,
			\item for any $g,\tilde{g}\in P$, if $g\neq \tilde g$, then $Bg\cap B\tilde g=\varnothing.$
		\end{enumerate}
	\end{lemma}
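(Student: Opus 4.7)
The plan is to take $P$ as a maximal subset of $G$ with the required separation property, and to read off positive density from the covering of $G$ that maximality forces. First, I would rephrase condition (2) algebraically: $Bg\cap B\tilde g=\varnothing$ iff no $b_1,b_2\in B$ satisfy $b_1g=b_2\tilde g$, iff $\tilde g g^{-1}\notin B^{-1}B$. Setting $K:=B^{-1}B$, a finite symmetric set containing the neutral element, condition (2) becomes: $\tilde g g^{-1}\notin K$ for every pair of distinct $g,\tilde g\in P$.

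Since $G$ is countable, I would pick by a greedy construction (or an appeal to Zorn's lemma) a maximal such $P\subset G$. Maximality means that any $g\in G\setminus P$ must fail the condition against some $p\in P$, i.e.\ $gp^{-1}\in K$ or $pg^{-1}\in K$; since $K=K^{-1}$, both options say $g\in Kp$. Hence
\[G=KP.\]

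For the density estimate, fix a finite $F\subset G$ and $w\in G$. Using $G=KP$, choose a map $\pi\colon Fw\to P$ with $g\in K\pi(g)$ for every $g\in Fw$; since each fiber $\pi^{-1}(p)$ lies in $Kp$, it has at most $|K|$ elements, so $|\pi(Fw)|\geqslant |F|/|K|$. As $\pi(Fw)\subseteq K^{-1}Fw\cap P$, this yields
\[\bigl|P\cap K^{-1}Fw\bigr|\;\geqslant\;\frac{|F|}{|K|}\qquad\text{for every }w\in G.\]
Applying this with $F=F_n$ from any F{\o}lner sequence in $G$ and setting $\tilde F_n:=K^{-1}F_n$, one gets a F{\o}lner sequence by Lemma \ref{lematopowiekszaniuCF} with $|\tilde F_n|/|F_n|\to 1$, and the F{\o}lner-sequence formula for $\underline d$ recalled just before the lemma gives
\[\underline d(P)=\lim_{n\to\infty}\inf_{w\in G}\frac{|P\cap\tilde F_n w|}{|\tilde F_n|}\;\geqslant\; \lim_{n\to\infty}\frac{|F_n|/|K|}{|\tilde F_n|}\;=\;\frac{1}{|K|}\;>\;0.\]

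The main point requiring care is bookkeeping on sides: $\underline d$ is defined using right translates $Fw$, while the covering $G=KP$ naturally produces left translates, and it is in the counting step via $\pi$ that the two have to mesh. Once the sides are kept straight (and the symmetry $K=K^{-1}$ is used to turn the maximality conclusion into a left-translate cover), the rest of the argument is entirely routine combinatorics.
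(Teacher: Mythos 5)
Your proof is correct. Note that the paper does not actually prove this lemma: it only cites \cite{FH} (Corollary 1.11), so your argument is a self-contained replacement rather than a variant of the paper's proof. What you give is the standard maximal-separated-set (net) argument, and all the delicate points check out: the translation of disjointness into $\tilde g g^{-1}\notin K$ with $K=B^{-1}B$ symmetric and containing $e$, the covering $G=KP$ from maximality, and the bookkeeping that puts the fiber bound on the side matching the right-translate convention $Fg$ in the definition of $\underline d$ (the image of $\pi$ lands in $K^{-1}Fw$, a right translate, as required). Two small remarks. First, the F{\o}lner limit at the end is more machinery than you need: since $\underline d$ is a supremum over all finite $F$, you may simply take $F=K^{-1}$ in its definition, and your uniform bound $|P\cap K^{-1}Kw|\geqslant |K|/|K|=1$ already gives $\underline d(P)\geqslant 1/|K^{-1}K|>0$ without invoking the limit formula or Lemma \ref{lematopowiekszaniuCF} (though your route, including the claim $|\tilde F_n|/|F_n|\to 1$, is also valid and yields the slightly better constant $1/|K|$). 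Second, the argument tacitly assumes $B\neq\varnothing$ (otherwise $K=\varnothing$ and $G=KP$ fails), but in that degenerate case condition (2) is vacuous and $P=G$ works, and in the paper's application $B$ carries a configuration and is therefore nonempty.
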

	For the proof see e.g. \cite{FH} (Corollary 1.11).

	To prove the next theorem we adapt the ideas used in \cite{S}.
	\begin{theorem}
		\label{thm:jakirreducibletoentropiamniejsza}
		Let $X$ be $D$-strongly irreducible. Then for any proper subshift $Y\subsetneq X$ we have
		$h(Y)<h(X).$
	\end{theorem}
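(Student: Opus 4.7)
The plan is to exploit the existence of a pattern forbidden in $Y$ but allowed in $X$, together with $D$-strong irreducibility, to show that configurations of $Y$ form an exponentially small fraction of those of $X$ on large Følner sets. First, by Fact~\ref{fact_zawieranie_jezykow} applied to the proper inclusion $Y\subsetneq X$, I would fix $F_0\in \mathcal F(G)$ and a configuration $\mathsf C^*\in \mathcal L_{F_0}(X)\setminus \mathcal L_{F_0}(Y)$; for each $g\in G$, denote by $\mathsf C^*_g$ its translate supported on $F_0 g$. Replacing $D$ by $D\cup D^{-1}$ if necessary (which only weakens the separation hypothesis in the definition of $D$-strong irreducibility, so the property is preserved), I may assume $D=D^{-1}$, and set $B := DF_0\in \mathcal F(G)$. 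This ``buffered cell'' $B$ is chosen so that whenever $Bg\cap B\tilde g =\varnothing$ one automatically has $D(F\setminus Bg)\cap F_0 g = \varnothing$ for any $F\supset Bg$---exactly the separation needed to apply strong irreducibility with $B_1=F\setminus Bg$ and $B_2=F_0 g$. Applying Lemma~\ref{lematozbiorzeP} with this $B$ then yields a set $P\subset G$ of positive lower Banach density $\underline d(P)>0$ such that $Bg\cap B\tilde g=\varnothing$ for distinct $g,\tilde g\in P$.

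The heart of the argument is a quantitative local switching inequality. Setting $L_0 := |\mathcal L_{F_0}(X)|$ and $L_\partial := |\mathcal L_{B\setminus F_0}(X)|$, I would establish that for every $F\in \mathcal F(G)$ and every finite $Q\subset P$ with $Bq\subset F$ for all $q\in Q$,
\[
|\mathcal L_F(Y)|\;\leq\;\Bigl(1-\tfrac{1}{L_0\, L_\partial}\Bigr)^{|Q|}|\mathcal L_F(X)|.
\]
The essential step is: for each $q\in Q$ and each $r\in \mathcal L_{F\setminus Bq}(X)$, the fraction of extensions $c\in \mathcal L_F(X)$ of $r$ with $c|_{F_0 q}=\mathsf C^*_q$ is at least $1/(L_0\,L_\partial)$. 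Indeed, strong irreducibility applied with $B_1=F\setminus Bq$ and $B_2=F_0 q$ produces at least one such extension, while the total number of extensions of $r$ is bounded by $|\mathcal L_{Bq}(X)|\leq L_0\,L_\partial$ via the injection coming from the disjoint decomposition $Bq = F_0 q\sqcup(Bq\setminus F_0 q)$. A short induction on $|Q|$---peeling off one $q_0\in Q$ and conditioning on the value $c|_{F\setminus Bq_0}=r$, noting that by $B$-separation of $Q$ one has $F_0 q\subset Bq\subset F\setminus Bq_0$ for every remaining $q\in Q\setminus\{q_0\}$ so that the conditions at those $q$'s are already recorded in $r$---then delivers the displayed bound. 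The inclusion $\mathcal L_F(Y)\subset \{c\in \mathcal L_F(X) : c|_{F_0 q}\neq \mathsf C^*_q\ \forall q\in Q\}$ is immediate because any $y\in \mathcal L_F(Y)$ extends to some $\bar y\in Y$, and $\mathsf C^*\notin \mathcal L(Y)$ forbids $\mathsf C^*_q$ at every translate.

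To conclude I would fix a Følner sequence $(F_n)$ in $G$, set $Q_n := \{g\in P : Bg\subset F_n\}$, and use the Følner property together with $\underline d(P)>0$ to obtain $\liminf_n |Q_n|/|F_n|\geq \underline d(P)$. Taking logarithms, dividing by $|F_n|$ in the displayed inequality, and letting $n\to\infty$ yields
\[
h(Y)\;\leq\;h(X)+\underline d(P)\,\log\!\Bigl(1-\tfrac{1}{L_0\,L_\partial}\Bigr)\;<\;h(X),
\]
which is the required strict inequality. The main obstacle I anticipate is precisely the local switching inequality: one has to choose $B$ so that the separation condition of strong irreducibility meshes cleanly with the disjointness of the $Bq$'s produced by Lemma~\ref{lematozbiorzeP}, and one must verify that the denominator bound $L_0\,L_\partial$ is \emph{uniform} in the outside configuration $r$---otherwise the induction over $Q$ compounds errors rather than producing geometric decay, and the resulting factor $1-1/(L_0 L_\partial)$ would fail to depend only on the fixed data $(F_0,\mathsf C^*,D)$.
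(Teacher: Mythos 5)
Your proposal is correct and follows essentially the same strategy as the paper's proof: fix a pattern in $\mathcal L(X)\setminus\mathcal L(Y)$, use Lemma~\ref{lematozbiorzeP} to plant disjoint buffered translates of it along a set of positive lower Banach density, apply strong irreducibility to get a uniform switching fraction at each site, and iterate to obtain geometric decay of $|\mathcal L_{F}(Y)|/|\mathcal L_{F}(X)|$. The only (immaterial) differences are bookkeeping ones — you symmetrize $D$ so as to apply irreducibility with the large set in the role of $B_1$ and shrink $F_n$ to its $B$-interior, whereas the paper keeps the small set as $B_1$ and enlarges the Følner sets to $\tilde BF_n$, and your constant $L_0L_\partial$ replaces the paper's $|\mathcal L_{\tilde B}(X)|$.
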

	\begin{proof}
		Since $Y$ is a proper subshift, there exists, by Fact \ref{fact_zawieranie_jezykow}, a configuration $\mathsf C\in \mathcal L(X)\setminus \mathcal L(Y)$ on some $B\in\mathcal F(G)$. Take $\tilde B:=DB$. From Lemma \ref{lematozbiorzeP} there exists $P\subset G$ such that $\underline d(P)>0$ and for any different $g,\tilde g\in P$ we have $\tilde Bg\cap \tilde B \tilde g=\varnothing$.
		
		Let $(F_n)$ be a F{\o}lner sequence in $G$. We know that $$\underline d(P)=\lim_{n\rightarrow \infty}\inf_{g\in G}\frac{|P\cap F_ng|}{|F_n|}.$$ Therefore for $n$ greater than some $N$ we have
		$$\frac{|P\cap F_n|}{|F_n|}\geqslant \inf_{g\in G}\frac{|P\cap F_ng|}{|F_n|}\geqslant \underline d(P)- \frac{\underline d(P)}2= \frac{\underline d(P)}2>0.$$
		Hence for $n>N$ we have
		\begin{equation}
			\label{onestar}
			|P\cap F_n|\geqslant\frac{\underline d(P)}2 |F_n|.
		\end{equation}
		Now, define $\tilde F_n:=\tilde B F_n$. By Lemma \ref{lematopowiekszaniuCF} $(\tilde F_n)$ is a F{\o}lner sequence in $G$. Fix $n>N$ and take $J:=P\cap F_n$ (notice, that for any $g\in J$ we have $\tilde Bg\subset \tilde F_n$). For any $I\subset J$ we define 
		$$\mathcal L_n^I:=\left\{x|_{\tilde F_n}: x\in X \land \forall_{g\in J\setminus I}\textrm{~} (gx)|_{B}\neq \mathsf C\right\}.$$
		In particular, $\mathcal L_n^J=\mathcal L_{\tilde F_n}(X)$ and $\mathcal L_n^{\varnothing} =\left\{x|_{\tilde F_n}: x\in X \land \forall_{g\in J}\textrm{~} (gx)|_{B}\neq \mathsf C\right\}$.
		
		Let us enumerate elements of $J$: $J=\{g_1\textrm{, }, g_2\textrm{, },\ldots,g_{|J|}\}$. For any $I=\{g_1,\ldots,g_i\}$, where $i<|J|$, we have
		$$\left|\mathcal L_n^I\right|=\left|\mathcal L_n^{I\cup\{g_{i+1}\}}\setminus\left\{x|_{\tilde F_n}: x\in X \land x|_{\tilde F_n}\in\mathcal L_n^{I\cup\{g_{i+1}\}}\land (g_{i+1}x)|_{B}=\mathsf C\right\} \right|$$
		\begin{equation}
			\label{twostars}
			=\left|\mathcal L_n^{I\cup\{g_{i+1}\}}\right| - \left|\left\{x|_{\tilde F_n}: x\in X \land x|_{\tilde F_n}\in\mathcal L_n^{I\cup\{g_{i+1}\}}\land (g_{i+1}x)|_{B}=\mathsf C\right\} \right|
		\end{equation}
        (we allow $I$ to be empty, then we take $i=0$).
		
		On the other hand, for any nonempty $I\subset J$ and $g\in I$ we have
		$$\left|\mathcal L_n^I\right|
		\leqslant \left|\mathcal L_{\tilde Bg}(X)\right| \left|\left\{x|_{\tilde F_n\setminus \tilde Bg}: x\in X \land x|_{\tilde F_n}\in \mathcal L_n^I \right\}\right|$$
		$$\leqslant \left|\mathcal L_{\tilde B}(X)\right| \left|\left\{x|_{\tilde F_n}: x\in X \land x|_{\tilde F_n}\in \mathcal L_n^I \land (gx)|_{B}=\mathsf C\right\}\right|,$$
		where the second inequality comes from the facts that $X$ is $D$-strongly irreducible and that $\left|\mathcal L_{\tilde Bg}(X)\right|=\left|\mathcal L_{\tilde B} (X)\right|.$ Hence for any nonempty $I\subset J$ and any $g\in I$ we get
		$$\left|\left\{x|_{\tilde F_n}: x\in X \land x|_{\tilde F_n}\in \mathcal L_n^I \land x|_{Bg}=\mathsf C\right\}\right|\geqslant\left|\mathcal L_{\tilde B}(X)\right|^{-1} \left|\mathcal L_n^I\right|.$$
		From \eqref{twostars} and the above we obtain
		\begin{equation}
			\label{threestars}
			\forall _{I=\{g_1,\ldots,g_i\}\textrm{, } i<|J|}~~\left|\mathcal L_n^I\right|\leqslant \left|\mathcal L_n^{I\cup\{g_{i+1}\}}\right|\left(1-\left|L_{\tilde B} (X)\right|^{-1}\right).
		\end{equation}
		Now, let us notice that $\mathcal L _{\tilde F_n}(Y)\subset\mathcal L_n^{\varnothing}$. Therefore, by using \eqref{threestars}
		$$\left|\mathcal L _{\tilde F_n}(Y)\right|\leqslant \left|\mathcal L_n^{\varnothing}\right|\leqslant \left|\mathcal L_n^{\{g_{1}\}}\right|\left(1-\left|L_{\tilde B} (X)\right|^{-1}\right).$$
		Inductively, by using \eqref{threestars} $\left(|J|-1\right)$ times more, we get
		$$\left|\mathcal L _{\tilde F_n}(Y)\right|\leqslant \left|\mathcal L_n^J\right|\left(1-\left|L_{\tilde B} (X)\right|^{-1}\right)^{|J|}$$
		$$\leqslant \left|\mathcal L_n^J\right|\left(1-\left|L_{\tilde B} (X)\right|^{-1}\right)^{\frac{\underline d(P)}2 |F_n|}.$$
		The last inequality comes from \eqref{onestar} and the fact that $\left(1-\left|L_{\tilde B} (X)\right|^{-1}\right)\in (0,1)$.
		Hence we get
		\begin{multline*}
			\frac{1}{\left|\tilde F_n\right|}\log{\left|\mathcal L _{\tilde F_n}(Y)\right|}\leqslant \frac{1}{\left|\tilde F_n\right|} \log{\left|\mathcal L_n^J\right|} + \frac{\frac{\underline d(P)}{2}|F_n|}{\left|\tilde B F_n\right|}\log{\left(1-\left|L_{\tilde B} (X)\right|^{-1}\right)}\\
			\leqslant \frac{1}{\left|\tilde F_n\right|} \log{\left|\mathcal L_n^J\right|} + \frac{\frac{\underline d(P)}{2}|F_n|}{\left|\tilde B\right| \left| F_n\right|}\log{\left(1-\left|L_{\tilde B} (X)\right|^{-1}\right)}\\
			=\frac{1}{\left|\tilde F_n\right|} \log{\left|\mathcal L_{\tilde F_n}(X)\right|} + \frac{\frac{\underline d(P)}{2}}{\left|\tilde B\right|}\log{\left(1-\left|L_{\tilde B} (X)\right|^{-1}\right)}.
		\end{multline*}
		By letting $n$ go to infinity ($n$ was fixed, greater than $N$) we obtain
		$$h(Y) \leqslant h(X) + \frac{\underline d(P)}{2\left|\tilde B\right|}\log{\left(1-\left|L_{\tilde B} (X)\right|^{-1}\right)}<h(X).$$
	\end{proof}
	
	\begin{theorem}
		\label{lematzeX_H^G/Hjestirreducible}
		If $X$ is $D$-strongly irreducible, then so is $X_H^{G/H}$.
	\end{theorem}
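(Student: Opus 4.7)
The plan is to reduce strong irreducibility of $X_H^{G/H}$ to strong irreducibility of $X$ one coset at a time, using the fact that elements of $X_H^{G/H}$ are obtained by freely choosing an element of $X_H$ on each coset via $\varphi$.

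Fix $B_1, B_2 \in \mathcal F(G)$ with $DB_1 \cap B_2 = \varnothing$, and fix $x = \varphi((x^\gamma)), y = \varphi((y^\gamma))$ in $X_H^{G/H}$. For each coset $\gamma \in G/H$ let $m_\gamma$ denote the unique element of $M \cap \gamma$, and set $B_i^\gamma := B_i \cap Hm_\gamma$ for $i=1,2$; only finitely many cosets give nonempty $B_1^\gamma \cup B_2^\gamma$. By definition of $X_H$, pick lifts $\tilde x_\gamma, \tilde y_\gamma \in X$ with $\tilde x_\gamma|_H = x^\gamma$ and $\tilde y_\gamma|_H = y^\gamma$; the key observation is that after right-translating by $m_\gamma^{-1}$, the sets $B_1 m_\gamma^{-1}$ and $B_2 m_\gamma^{-1}$ are finite subsets of $G$ satisfying $D(B_1 m_\gamma^{-1}) \cap (B_2 m_\gamma^{-1}) = (DB_1 \cap B_2)m_\gamma^{-1} = \varnothing$. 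So $D$-strong irreducibility of $X$ yields a point $\tilde z_\gamma \in X$ agreeing with $m_\gamma \tilde x_\gamma$ on $B_1 m_\gamma^{-1}$ and with $m_\gamma \tilde y_\gamma$ on $B_2 m_\gamma^{-1}$ (applying the appropriate shifts to place the data on these translated sets; equivalently, lift the relevant restrictions of $x$ and $y$ directly). Define $z^\gamma := \tilde z_\gamma|_H \in X_H$ for these finitely many cosets, and put $z^\gamma := x^\gamma$ for the remaining ones.

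Finally, set $z := \varphi((z^\gamma)) \in X_H^{G/H}$ and verify the required agreements. For $g \in B_1$, writing $g = hm_\gamma$ with $\gamma = gH$, we get $h \in B_1 m_\gamma^{-1} \cap H$, so $z(g) = z^\gamma(h) = \tilde z_\gamma(h) = \tilde x_\gamma(h) = x^\gamma(h) = x(g)$; the case $g \in B_2$ is symmetric. This is exactly $z|_{B_1} = x|_{B_1}$ and $z|_{B_2} = y|_{B_2}$, proving $D$-strong irreducibility of $X_H^{G/H}$.

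The argument is essentially bookkeeping and the only thing to be careful about is the translation between the intrinsic $H$-coordinates of $X_H$ (used in $\varphi$) and the $G$-coordinates where strong irreducibility of $X$ is formulated; concretely, that one should apply $D$-strong irreducibility of $X$ to the translated sets $B_1 m_\gamma^{-1}, B_2 m_\gamma^{-1}$ rather than to $B_1^\gamma, B_2^\gamma$ directly, since in general $D$ need not be contained in $H$.
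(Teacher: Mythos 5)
Your proposal is correct and follows essentially the same route as the paper: decompose $B_1,B_2$ along the cosets of $H$, translate each piece into $H$ by $m_\gamma^{-1}$, apply $D$-strong irreducibility of $X$ to lifts of $x^\gamma,y^\gamma$ (the translated sets still satisfy the separation condition since $(DB_1\cap B_2)m_\gamma^{-1}=\varnothing$), and reassemble via $\varphi$. One small slip: with the paper's shift convention the point $\tilde z_\gamma$ should be required to agree with $\tilde x_\gamma$ (not with $m_\gamma\tilde x_\gamma$) on $B_1m_\gamma^{-1}$, which is in fact what your final verification uses.
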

	
	\begin{proof}
		Let $B_1,B_2\subset G$ be finite and such that $DB_1\cap B_2=\varnothing$. Take any $x,y\in X_H^{G/H}$ and let $(x^\gamma),(y^\gamma)\in (X_H)^{G/H}$ be such that $x=\varphi((x^\gamma))$ and $y=\varphi((y^\gamma))$. Write $M=\{m_1,m_2,\ldots\}$. For any $i=1,2,\ldots$ define $S_i=B_1\cap Hm_i$ and $T_i=B_2\cap Hm_i$.
		
		By the definition of $\varphi$ for each $i$ we have $(m_ix)|_{S_im_i^{-1}}=x^{Hm_i}|_{S_im_i^{-1}}$ and $(m_iy)|_{T_im_i^{-1}}=y^{Hm_i}|_{T_im_i^{-1}}$ (since $S_i$ and $T_i$ are subsets of $Hm_i$ and $H$ is normal, $S_im_i^{-1}$ and $T_im_i^{-1}$ are subsets of $H$). Moreover, there exist $x_i, y_i\in X$ such that $x^{Hm_i}=x_i|_H$ and $y^{Hm_i}=y_i|_H$. Since $X$ is $D$-strongly irreducible
		and 
		$$DS_im_i^{-1}\cap T_i m_i^{-1}\subset (DB_1\cap B_2)m_i^{-1}=\varnothing,$$
		there exists $z_i\in X$ such that $z_i|_{S_im_i^{-1}}= x_i|_{S_im_i^{-1}}$ and $ z_i|_{T_im_i^{-1}}= y_i|_{T_im_i^{-1}}$. By taking $z^{Hm_i}= z_i|_H$ for any $i$ and defining $z=\varphi((z^\gamma))$ we get $z|_{S_i}=x|_{S_i}$ and $z|_{T_i}=y|_{T_i}$ for any $i$. Therefore $z|_{B_1}=|x_{B_1}$ and $z|_{B_2}=|y_{B_2}$.
	\end{proof}
	We are now ready to formulate the main theorem.
	\begin{theorem}
		\label{guwnetwerdzenie}
		If $X$ is $D$-strongly irreducible and $h(X)=h(X_H)$, then $X=X_H^{G/H}$.
	\end{theorem}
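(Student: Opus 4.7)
The plan is to combine the three preceding theorems---Fact \ref{faktozawieraniu}, Theorem \ref{thm:entropiaX_H^G/H}, Theorem \ref{thm:jakirreducibletoentropiamniejsza}, and Theorem \ref{lematzeX_H^G/Hjestirreducible}---via a short argument by contradiction. The inclusion $X \subset X_H^{G/H}$ is already given by Fact \ref{faktozawieraniu}, so the only thing to establish is the reverse inclusion.

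My approach is as follows. First I would assume, toward a contradiction, that $X \subsetneq X_H^{G/H}$. Since $X$ is compact, nonempty and $G$-invariant, it is a proper subshift of the $G$-subshift $X_H^{G/H}$. By Theorem \ref{lematzeX_H^G/Hjestirreducible}, the ambient space $X_H^{G/H}$ inherits $D$-strong irreducibility from $X$. Hence Theorem \ref{thm:jakirreducibletoentropiamniejsza}, applied with $X_H^{G/H}$ in the role of the strongly irreducible system and $X$ in the role of the proper subshift, yields the strict inequality
\[
h(X) < h(X_H^{G/H}).
\]

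Next I would invoke Theorem \ref{thm:entropiaX_H^G/H}, which gives $h(X_H^{G/H}) = h(X_H)$. Chaining these together produces $h(X) < h(X_H)$, in direct contradiction with the standing hypothesis $h(X) = h(X_H)$. Therefore the assumption $X \subsetneq X_H^{G/H}$ is untenable, forcing $X = X_H^{G/H}$.

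There is essentially no obstacle at this stage, since all the real work has been done earlier: the entropy-drop principle for strongly irreducible systems (Theorem \ref{thm:jakirreducibletoentropiamniejsza}) was the technically demanding step, and the inheritance of strong irreducibility by $X_H^{G/H}$ (Theorem \ref{lematzeX_H^G/Hjestirreducible}) together with the entropy equality $h(X_H^{G/H}) = h(X_H)$ (Theorem \ref{thm:entropiaX_H^G/H}) are precisely what makes the contradiction go through. The only point worth verifying explicitly is that $X$ is genuinely a subshift of $X_H^{G/H}$ (nonempty, compact, $G$-invariant inside $X_H^{G/H}$), but this is immediate from $X$ being a $G$-subshift of $\mathcal A^G$ together with the inclusion from Fact \ref{faktozawieraniu}.
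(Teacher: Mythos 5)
Your argument is correct and is essentially identical to the paper's own proof: both assume $X \subsetneq X_H^{G/H}$, transfer $D$-strong irreducibility to $X_H^{G/H}$ via Theorem \ref{lematzeX_H^G/Hjestirreducible}, apply the entropy-drop result of Theorem \ref{thm:jakirreducibletoentropiamniejsza}, and conclude $h(X) < h(X_H)$ from Theorem \ref{thm:entropiaX_H^G/H}, contradicting the hypothesis. No changes are needed.
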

	\begin{proof}
		From Fact \ref{faktozawieraniu} we know that $X\subset X_H^{G/H}$. Assume that $X\subsetneq X_H^{G/H}$. By Theorem \ref{lematzeX_H^G/Hjestirreducible} $X_H^{G/H}$ is $D$-strongly irreducible. Then by theorems \ref{thm:jakirreducibletoentropiamniejsza} and \ref{thm:entropiaX_H^G/H} we get $h(X)<h(X_H)$.
	\end{proof}

        \textbf{Aknowledgements.} First of all I want to thank my supervisor Bartosz Frej, who introduced me into the subject and was helping me during my work. I am also grateful to Tomasz Downarowicz for lots of useful advice.
	
\end{document}